\theoremstyle{plain}
\newtheorem{theorem}{Theorem}
\newtheorem{lemma}{Lemma}
\newtheorem{corollary}{Corollary}
\newtheorem*{conj}{Conjecture}
\theoremstyle{remark}
\newtheorem*{remark}{Remark}
\theoremstyle{definition}
\newtheorem*{defin}{Definition}
\renewcommand{\geq}{\geqslant}
\renewcommand{\leq}{\leqslant}
\def\_phi{\varphi}
\def\co{\mathrm{conv}}
\date{}
\title{Minimum number of partial triangulations}
\author{Andrey Kupavskii\footnote{G-SCOP, CNRS, University Grenoble-Alpes, France and Moscow Institute of Physics and Technology, Russia; Email: {\tt kupavskii@ya.ru}}, Aleksei Volostnov\footnote{Moscow Institute of Physics and Technology, Russia; Email: {\tt gyololo@yandex.ru}}, Yury Yarovikov\footnote{Moscow Institute of Physics and Technology, Russia; Email: {\tt yu-rovikov@yandex.ru}},
}
\begin{document}
\maketitle
\begin{abstract}
We show that the number of partial triangulations of a set of $n$ points on the plane is at least the $(n-2)$-nd Catalan number. This is tight for convex $n$-gons. We also describe all the equality cases.
\end{abstract}
Let $M$ be a set of points on the plane and let $M'$ be the set of all vertices of the convex hull ${\rm conv}(M)$ of $M$. A {\it full triangulation} of $M$ is a triangulation of ${\rm conv}(M)$ such that the set of vertices of the triangulation is $M$. A {\it partial triangulation} is a triangulation of ${\rm conv}(M)$ such that the set of its vertices $V$ satisfies $M'\subset V\subset M$.

Let $W_n$ be the number of (full or partial) triangulations of a convex $n$-gon, and put $W_2 = 1$ for convenience. It is easy to see that $W_n = c_{n-2}$, where $c_i$ is the $i$-th Catalan number (see Lemma~\ref{lemma-verify-equality-case} for a proof). Recall that $c_n = \frac 1{n+1}{2n\choose n},$  $c_{n+1} =\sum_{i=0}c_ic_{n-i}$, and that the first few numbers starting from $c_0$ are $\{1,1,2,5,14,42,\ldots\}$.

The following conjecture was raised by Emo Welzl during the Oberwolfach meeting on Discrete geometry in September 2020 (cf. Problem 10 in \cite{problist}):

\begin{conj}
Convex $n$-gons minimize the number of partial triangulations among any point sets in general position. In other words, any set of $n$ points on the plane in general position has at least $W_n$ partial triangulations.
\end{conj}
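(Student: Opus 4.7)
I would proceed by strong induction on $n = |M|$. The base case $n = 3$ is immediate, since the only partial triangulation is the triangle itself, giving $T(M) = 1 = c_1$. For the inductive step with $n \geq 4$, if $M$ is in convex position then Lemma~\ref{lemma-verify-equality-case} already gives $T(M) = W_n = c_{n-2}$, so we may assume $M$ has at least one interior point.

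The guiding idea is to mirror the Catalan recursion $c_{n-2} = \sum_{j=0}^{n-3} c_j c_{n-3-j}$ at the level of partial triangulations. Fix a convex-hull edge $e = v_1 v_2$, where the hull vertices of $M$ are $v_1, \ldots, v_h$ in cyclic order. In any partial triangulation the edge $e$ lies in a unique triangle $\triangle v_1 v_2 w$ with $w \in M \setminus \{v_1,v_2\}$, and so
\[
T(M) = \sum_{w} T_w(M),
\]
where $T_w(M)$ counts the partial triangulations containing the triangle $\triangle v_1 v_2 w$. The sum has $n-2$ terms, matching the number of terms in the Catalan recursion.

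When $w = v_k$ is another hull vertex, the triangle $\triangle v_1 v_2 v_k$ splits $\mathrm{conv}(M)$ into two convex sub-polygons with strictly fewer points of $M$; interior points of $M$ lying inside $\triangle v_1 v_2 v_k$ are simply forbidden from appearing in such triangulations. Applying the inductive hypothesis to each sub-polygon then yields
\[
T_{v_k}(M) \geq c_{(k-1)+s_1-2}\cdot c_{(h-k+2)+s_2-2},
\]
where $s_1, s_2$ count the interior points of $M$ inside the two sub-polygons. In the convex case $h = n$ these contributions sum to exactly $c_{n-2}$ by the Catalan identity, closing the induction.

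The main technical obstacle is the case when $w$ is an interior point: the region $\mathrm{conv}(M) \setminus \triangle v_1 v_2 w$ is a non-convex polygon in which $w$ appears as a reflex vertex, so the inductive hypothesis does not apply directly. A natural remedy is to launch a secondary recursion at $w$: since $w$ is reflex, any triangulation of this region must contain at least one additional edge $wz$, and summing over the choice of $z$ produces a further decomposition that can be attacked inductively. The crucial step is to prove that the interior-$w$ contributions precisely compensate for the shortfall in the Catalan sum that arises from the hull-$w$ case whenever some triangle $\triangle v_1 v_2 v_k$ actually contains interior points of $M$. Characterizing the equality cases would then follow by tracking when each of the inductive inequalities becomes tight, which should force a very restrictive geometric structure on $M$.
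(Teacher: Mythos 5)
Your proposal is a plan rather than a proof, and the step you defer is precisely the entire difficulty of the problem. The hull-apex case is fine: when $w=v_k$, the triangulations containing $\triangle v_1v_2v_k$ do factor as a product over the two convex sub-regions, and induction applies. But, as you note, the resulting sum $\sum_k c_{a_k}c_{b_k}$ falls short of $c_{n-2}$ as soon as interior points exist, both because points inside $\triangle v_1v_2v_k$ are excluded from those terms (so $a_k+b_k<n-4$) and because the convolution is no longer the full Catalan one. Your claim that ``the interior-$w$ contributions precisely compensate for the shortfall'' is exactly the statement that needs proving, and the ``secondary recursion at $w$'' does not obviously deliver it: once $w$ is a reflex vertex of the region $\co(M)\setminus\triangle v_1v_2w$, you are counting triangulations of a non-convex region relative to a point set that is not the full set of points in its convex hull, so the inductive hypothesis (which is about point sets and their convex hulls) no longer applies, and iterating the reflex-vertex decomposition only produces more such regions. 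There is no quantitative handle in your sketch on how large $\sum_{w\ \mathrm{interior}} T_w(M)$ is, so the induction cannot be closed as written.

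The paper takes a structurally different route that avoids summing over the apex of a single edge. It fixes one hull vertex $A$, pushes every interior point outward along the ray from $A$ until all points are in convex position (``green'' points), and then compares, class by class, the triangulations of the resulting convex polygon with those of the original set, where a class is determined by the exact set of edges at $A$. The matching between classes is done via a bijection between polylines from one neighbour of $A$ to the other and their ``characteristic vectors'' (Lemma~\ref{main-lemma}), and within each class the comparison reduces to the submultiplicativity $W_xW_y\le W_{x+y-2}$ (Lemma~\ref{lemw}) together with induction applied to the point sets above each maximal cap of the matched polyline. If you want to salvage your approach, you would need an independent argument for the interior-apex deficit; otherwise I recommend switching to a global comparison of the above kind.
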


Interestingly, convex $n$-gons are not the only examples of point sets with minimum number of partial triangulations. Another $n$-point set with $W_{n-2}$ partial triangulations is the so-called {\it double circle}: a set consisting of a convex $n/2$-gon and $n/2$ points chosen in the interior of the convex hull, each corresponding to one of the sides of the outer $n/2$-gon and very close to the midpoint of the respective side. Double circles are the examples with the smallest known number of full triangulations: $12^{n/2}$ for $n$-point sets. We refer to a paper of E. Welzl \cite{W} for an account of related counting problems, and to a paper of U. Wagner and E. Welzl \cite{WW} for the study of flip graphs of full and partial triangulations. \\

In this short note, we show that the above conjecture is indeed true and determine all possible equality cases. For a set of points $M$ and a point $P\in M$ we say that $P$ is an {\it interior point} of $M$ if it does not lie on the convex hull of $M$. We say that in interior point $P$ is {\it close} to a side $QR$ of the convex hull of $M$, if any triangle $P'QR$ with $P'\in M$ contains $P.$ Note that, for any side $QR$ of the convex hull of $M$, there may be at most one point close to it.
\begin{defin}
We call a set of points $M$ \textit{quasi-convex} if each interior point of $M$ is close to some side of $\co (M)$.
\end{defin}

\begin{theorem}\label{thm1}
Any set of $n$ points on the plane in general position has at least $W_n$ partial triangulations. Moreover, a set of $n$ points has exactly $W_n$ triangulations if and only if it is quasi-convex.
\end{theorem}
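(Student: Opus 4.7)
My plan is to induct on the number $k = n - h$ of interior points of $M$, proving the inequality $f(M) \geq W_n$ and the equality characterization simultaneously. The base case $k = 0$ is Lemma~\ref{lemma-verify-equality-case} (a convex $n$-gon is vacuously quasi-convex). For $k \geq 1$, I would split into two cases according to whether some interior point of $M$ is close to a hull side.

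The workhorse in the ``close'' case is the following \emph{absorption identity}: if an interior point $P$ is close to a hull side $QR$, then $f(M) = f(M^*)$, where $M^*$ is obtained from $M$ by replacing $P$ with a point $P^*$ placed just outside the open segment $QR$, so that $P^*$ becomes a convex hull vertex of $M^*$ subdividing the edge $QR$. I would first verify that $\triangle PQR$ contains no other point of $M$: if some $P'' \in M \setminus \{P, Q, R\}$ lay in $\triangle PQR$, then by closeness of $P$ we would also have $P \in \triangle P''QR$, and writing the two containments in barycentric coordinates with respect to $\{P,Q,R\}$ and $\{P'',Q,R\}$ respectively and eliminating forces $P$ onto line $QR$, contradicting general position. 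Partitioning partial triangulations of $M$ by whether $P$ is used then yields $f(M) = f(M \setminus \{P\}) + f(\Omega)$, where $\Omega = \mathrm{conv}(M) \setminus \triangle PQR$, because closeness forces $\triangle PQR$ to be a face whenever $P$ is used. Partitioning partial triangulations of $M^*$ by whether $QR$ is used as a diagonal gives the matching decomposition $f(M^*) = f(M \setminus \{P\}) + f(\Omega)$, and the identity follows. Since $M^*$ has $k-1$ interior points and, for a judicious choice of $P^*$, is quasi-convex exactly when $M$ is, the inductive hypothesis closes this case.

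The remaining case, which I expect to be the main obstacle, is when no interior point of $M$ is close to any hull side; here I need the strict inequality $f(M) > W_n$. My plan is to fix a convex hull edge $e = QR$ and decompose $f(M) = \sum_P N_P$ over all possible third vertices $P \in M \setminus \{Q, R\}$ of the triangle above $e$, where $N_P$ counts partial triangulations in which $\triangle PQR$ is a face. For each hull vertex $P$, this factors as $N_P = f(M_1^P)\, f(M_2^P)$ on the two subconfigurations split off by the diagonals $PQ$ and $PR$; by induction each factor is at least $W_{|M_i^P|}$, and in the convex case the Catalan recurrence $c_{n-2} = \sum_i c_i c_{n-3-i}$ gives exactly $W_n$. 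The strict surplus in this case should come either from some interior $P'$ contributing a positive $N_{P'}$ absent from the convex recurrence, or from some subconfiguration $M_i^P$ being itself non-quasi-convex and hence strictly above $W_{|M_i^P|}$ by the equality half of the inductive hypothesis. The delicate part --- guaranteeing a strict gain somewhere and ensuring that it is not offset by points ``wasted'' inside various $\triangle PQR$ --- is the main technical hurdle, and I anticipate it will require choosing the edge $e$ in coordination with an interior point witnessing the failure of quasi-convexity.
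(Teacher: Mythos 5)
Your reduction in the ``close'' case rests on the claim that partial triangulations of $M^*$ not using $QR$ as a diagonal are counted by $f(\Omega)$, and this is where the argument breaks. A triangulation of $\Omega$ is a fan from the reflex vertex $P$ onto a polyline (link) from $Q$ to $R$, plus a triangulation of the region above that polyline; a triangulation of $M^*$ with $\deg(P^*)\ge 3$ has the same structure with $P$ replaced by $P^*$. But the set of realizable links is governed by the radial order of the points of $M\setminus\{P\}$ around the apex, and this order can differ between $P$ and $P^*$: two points $X,Y$ swap exactly when the line $XY$ separates $P$ from $P^*$, and closeness of $P$ to $QR$ does not prevent a line through two other points of $M$ from crossing the segment $QR$ and leaving $P$ on the far side from where you drop $P^*$. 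Concretely, take $Q=(0,0)$, $R=(1,0)$, $S=(0.5,10)$, interior points $W_1=(0.4,2)$, $W_2=(0.6,1)$, $P=(0.9,0.1)$; one checks $P$ is close to $QR$, yet the line $W_1W_2$ meets $QR$ at $x=0.8$, so for $P^*=(0.5,-\eps)$ the radial orders around $P$ and $P^*$ are $Q,W_2,W_1,S,R$ versus $Q,W_1,S,W_2,R$. Working out both sides gives $f(M)=16$ but $f(M^*)=15$ (the link $Q,W_2,S,R$ at $P$ leaves $W_1$ usable above the fan and contributes $2$, while the corresponding link $Q,S,W_2,R$ at $P^*$ traps $W_1$ inside the fan triangle $P^*QS$ and contributes only $1$). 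So the identity as justified is false; it can only be rescued by choosing $P^*$ on $QR$ so as to reproduce the exact order type seen from $P$, and proving such a position always exists is a genuine geometric lemma you have not supplied --- it is precisely the kind of invariance the paper's characteristic-vector machinery (Lemmas~\ref{main-lemma} and~\ref{main-lemma2}) is built to deliver.

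The second, larger gap is that the case you yourself flag as the main obstacle --- no interior point close to any hull side, where one needs $f(M)>W_n$ and, a fortiori, the basic bound $f(M)\ge W_n$ --- is exactly where the whole content of the theorem lives, and your plan for it does not close. Decomposing over the apex $P$ of the triangle on a fixed hull edge gives $N_P=f(M_1^P)f(M_2^P)$ only after discarding every point of $M$ inside $\triangle PQR$, so $|M_1^P|+|M_2^P|$ can fall well short of $n+1$ and the term-by-term comparison with the Catalan recurrence genuinely fails; there is no local repair, because the deficit must be recovered globally across the sum. The paper's resolution is a different idea altogether: fix a hull vertex $A$, push each interior point out along the ray from $A$ to obtain a convex configuration, and injectively match each triangulation of the convex set to a triangulation of $M$ by matching the link of $A$ through the characteristic vector of the corresponding polyline, with Lemma~\ref{lemw} ($W_xW_y\le W_{x+y-2}$) absorbing the points lost under each convex cap. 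Without that (or some substitute for it), the proposal proves neither the inequality nor the equality characterization outside the quasi-convex case.
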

That is, all extremal examples in some sense ``interpolate'' between a set in convex position and the double circle.

In the next section, we will give the proof of the conjecture without the equality cases. In Section~\ref{sec2} we will give the proof of the theorem with the equality cases. The two proofs are similar, but the second builds on the first one and is a bit more technical. This is the reason we give both proofs.

\section{Proof of the conjecture}\label{sec1}

\begin{lemma}\label{lemw}
For integer $x,y\geq2$ we have
$$W_xW_y\leq W_{x+y-2}.$$

\end{lemma}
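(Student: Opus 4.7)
The plan is to prove the inequality via a direct combinatorial injection between triangulation sets. Recall that, as noted in the excerpt, $W_n$ counts the triangulations of a convex $n$-gon for $n\geq 3$, with the convention $W_2 = 1$ covering the degenerate case of a single edge.

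First I would dispose of the boundary case. If $x = 2$, then $W_x = 1$ and $x+y-2 = y$, so the inequality reduces to $W_y \leq W_y$; the case $y = 2$ is symmetric. Hence we may assume $x, y \geq 3$.

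For the main case, I would fix a convex $(x+y-2)$-gon $P$ with vertices $v_1, v_2, \ldots, v_{x+y-2}$ in cyclic order, and consider the diagonal $d = v_1 v_x$. This diagonal partitions $P$ into a convex $x$-gon $P_1$ on the vertex set $\{v_1, v_2, \ldots, v_x\}$ and a convex $y$-gon $P_2$ on the vertex set $\{v_x, v_{x+1}, \ldots, v_{x+y-2}, v_1\}$. Given any triangulation $T_1$ of $P_1$ and any triangulation $T_2$ of $P_2$, the union $T_1 \cup T_2$ is a triangulation of $P$ containing $d$, and conversely $(T_1, T_2)$ can be reconstructed from $T_1 \cup T_2$ by restricting to each of the two closed half-planes bounded by $d$. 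The map $(T_1, T_2) \mapsto T_1 \cup T_2$ is therefore an injection from the product of triangulation sets into the triangulation set of $P$, which yields
$$W_x W_y \;=\; |\mathcal T(P_1)|\cdot |\mathcal T(P_2)| \;\leq\; |\mathcal T(P)| \;=\; W_{x+y-2}.$$

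There is no real obstacle to overcome here; the argument is essentially the same observation underlying the classical Catalan recurrence $c_{n+1} = \sum_i c_i c_{n-i}$, namely that the triangulations of a convex polygon that use a prescribed diagonal are in natural bijection with pairs of triangulations of the two sub-polygons cut off by that diagonal.
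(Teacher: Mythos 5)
Your proof is correct, but it takes a different route from the paper's. The paper first translates the inequality into Catalan numbers via $W_n = c_{n-2}$ and then proves $c_\alpha c_\beta \le c_{\alpha+\beta}$ by concatenating balanced bracket sequences: a sequence of $\alpha$ pairs followed by one of $\beta$ pairs is a balanced sequence of $\alpha+\beta$ pairs, and the concatenation map is injective. You instead work directly with the geometric objects being counted: you fix the diagonal $v_1v_x$ of a convex $(x+y-2)$-gon and inject pairs of triangulations of the two sub-polygons into triangulations of the whole polygon. The two arguments are essentially images of one another under the standard bijection between polygon triangulations and balanced bracket sequences, but yours has the small advantage of not needing the identification $W_n = c_{n-2}$ at all (which the paper only establishes later, in Lemma~\ref{lemma-verify-equality-case}), while the paper's version is slightly shorter once that identification is taken for granted. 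Your handling of the degenerate cases $x=2$ or $y=2$ and the check that $v_1v_x$ is a genuine diagonal when $x,y\ge 3$ are both in order.
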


\begin{proof}
Let us rewrite the inequality using Catalan numbers. Let $\alpha = x-2$, $\beta = y-2$. We have $W_x W_y = c_{\alpha}c_{\beta}$, $W_{x+y-2} = c_{\alpha + \beta}$, and the inequality states $c_\alpha c_\beta \le c_{\alpha+\beta}$. By one of the definitions of Catalan numbers, $c_{\alpha}$ and $c_{\beta}$ are the numbers of balanced bracket sequences of $\alpha$ and $\beta$ pairs of brackets, respectively. A concatenation of two such bracket sequences is, in turn, a balanced bracket sequence of $\alpha+\beta$ brackets. There are exactly $c_{\alpha + \beta}$ such bracket sequences and each two sequences obtained by such concatenation are clearly different (for fixed $\alpha$ and $\beta$). This concludes the proof.
\end{proof}

\begin{corollary}
For integer $k_1,k_2,\ldots,k_m\geq2$ we have
$$W_{k_1}\ldots W_{k_m}\leq W_{k_1+\ldots + k_m-2(m-1)}.$$
\end{corollary}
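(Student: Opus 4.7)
The plan is to prove the corollary by straightforward induction on the number of terms $m$, with Lemma~\ref{lemw} as the base case and the engine of the inductive step. There is essentially no combinatorial content to discover beyond what Lemma~\ref{lemw} already provides; the task is to iterate the two-term inequality and verify that the hypothesis $x,y \geq 2$ remains available throughout.

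More concretely, the base case $m=2$ is exactly Lemma~\ref{lemw}. For the inductive step, assume the inequality holds for $m-1$ factors, so that
$$W_{k_1}\cdots W_{k_{m-1}} \leq W_{k_1+\cdots+k_{m-1} - 2(m-2)}.$$
Multiplying both sides by $W_{k_m}$ and applying Lemma~\ref{lemw} with $x = k_1+\cdots+k_{m-1} - 2(m-2)$ and $y = k_m$, I would obtain
$$W_{k_1}\cdots W_{k_m} \leq W_x\, W_y \leq W_{x+y-2} = W_{k_1+\cdots+k_m - 2(m-1)},$$
which is the desired bound.

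The only thing to check, and really the only potential pitfall, is that Lemma~\ref{lemw} requires both arguments to be at least $2$. Here $y = k_m \geq 2$ by assumption, and
$$x = k_1+\cdots+k_{m-1} - 2(m-2) \;\geq\; 2(m-1) - 2(m-2) \;=\; 2,$$
since each $k_i \geq 2$, so the hypothesis of Lemma~\ref{lemw} is satisfied at every step of the induction. I would also note that the proof proceeds identically if one instead groups the factors greedily from the left; the inductive telescoping mirrors the concatenation of bracket sequences used in the proof of Lemma~\ref{lemw}, which explains why no new idea is needed.
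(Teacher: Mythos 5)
Your proof is correct and is precisely the routine induction on $m$ that the paper leaves implicit (the corollary is stated without proof, as an immediate iteration of Lemma~\ref{lemw}). The arithmetic in the inductive step and the check that both arguments stay at least $2$ are both right, so nothing is missing.
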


For this section we tacitly assume that all the angles that we work with are smaller than $\pi$. Given points $A,B,C$ on the plane, consider the angle $BAC$ and points $X,Y$ inside the angle, all $5$ points being in general position.
We say that a point $X\in M$ is \emph{to the left} of a point $Y\in M$ if $\angle BAX < \angle BAY$, and is \emph{to the right} if $\angle BAX > \angle BAY$ (cf. Fig.~\ref{fig2}).
The terminology comes from the situation when the bisector of $\angle BAC$ is a vertical ray going downwards, $B$ is on the left ray of the angle, and $C$ is on the right ray. Still, note that it is not the same as stating that the $x$-coordinate of $X$ is less than that of $Y$.
In what follows, we use the term {\it polyline} for a polygonal chain (broken line).

\begin{figure}
    \centering
\includegraphics[width=0.6\linewidth]{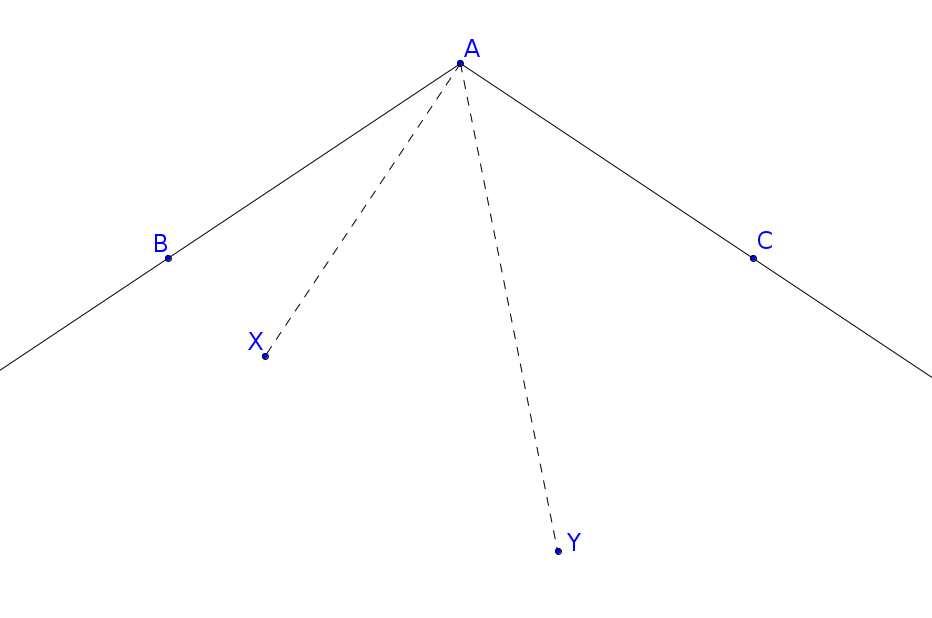}
    \caption{Point $X$ is to the left of the point $Y$.}
    \label{fig2}
\end{figure}

Fix an angle $ABC$ and set of points $M$ inside the angle, all points being in general position. The set $M = \{P_1, P_2, \ldots, P_n\}$ is enumerated left to right. Also, put $P_0 = B$, $P_{n+1} = C$ for convenience.
Consider an arbitrary polyline $L=P_{a_0}P_{a_1}\ldots P_{a_{m+1}}$, $a_0 = 0<a_1<\ldots<a_m<a_{m+1} = n+1$, going from left to right.

The next definition is  describing the position of points in $M$ with respect to $L$. Let us first give it informally. Imagine that there are nails in each of the points $M\cup \{B,C\}$ and there is a rubber band stretched from $B$ go $C$ that breaks exactly in the points of $L$. We are going to define a characteristic vector of $L,$ which for each point of $P$ encodes whether  the rubber band passes above or below the nail.

Let us give a formal definition. Fix a point $P_i \in M$. Let $P_{a_{l}}$ and $P_{a_r}$ be the two points of the polyline $L$ such that $a_l<i<a_r$ and $r-l$ is minimal possible. Depending on whether or not $P$ is itself a vertex of the polyline,  $r-l$ is either 2 or 1.

Let $P$ be a vertex from $M\setminus L$. We say that the polyline  \emph{passes above} $P$ if  $AP$ intersects $P_{a_{l}}P_{a_r}$, and \emph{passes below} $P$ if $AP$ does not intersect $P_{a_{l}}P_{a_r}$.
Let $P$ be a vertex from $ L$. Then the definition is opposite (and it is useful to keep in mind the rubber band analogy here). We say that the polyline  \emph{passes above} $P$ if the segment $AP$ does not intersect with the segment $P_{a_{l}}P_{a_r}$, and \emph{passes below} $P$ if $AP$ intersects with $P_{a_{l}}P_{a_r}$.
Finally, we define the  \textit{characteristic vector} $\chi_L \in \{0,1\}^n$ of $L$, where $\chi_i = 1$ iff $L$ passes above $P_i$.


In what follows, we tacitly assume that a polyline $L$ in a given angle $XYZ$ starts with $X$ and ends with $Z$. The {\it internal vertices} of a polyline are its vertices excluding $X,Z.$
\begin{lemma}\label{main-lemma}
Fix an angle $BAC$ and a set of points $M=\{P_1, P_2, \ldots, P_n\}$ inside the angle. Then any vector in $\{0,1\}^n$ is a characteristic vector of exactly one polyline $L$ in $M$.
\end{lemma}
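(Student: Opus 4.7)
The plan is to proceed by induction on $n$. The base case $n=0$ is immediate: the only polyline is $BC$, and the only characteristic vector is the empty one.

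For the inductive step, I would partition the $2^n$ polylines on $\{P_1,\dots,P_n\}$ by whether $P_1$ is a vertex. If $P_1\notin L$, then $L$ is a polyline on $\{P_2,\dots,P_n\}$ in the same angle $BAC$. If $P_1\in L$, then $L = B\,P_1\,L''$ where $L''$ is a polyline from $P_1$ to $C$ through $\{P_2,\dots,P_n\}$, i.e.\ a polyline in the smaller angle $P_1AC$. Since $P_1$ is the leftmost point of $M$, a direct check shows that for every $i\geq 2$ the surrounding segment of $P_i$ in $L$ coincides with its surrounding segment in the restricted polyline, so the restricted polyline has characteristic vector $(\chi_2,\dots,\chi_n)$. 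Applying the inductive hypothesis in the angles $BAC$ and $P_1AC$ respectively, for each $(\chi_2,\dots,\chi_n)\in\{0,1\}^{n-1}$ there is a unique candidate $L_1$ of the first type and a unique candidate $L_2$ of the second type.

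It then suffices to prove that $\chi_1^{(L_1)}\ne \chi_1^{(L_2)}$: this forces each target value of $\chi_1$ to select exactly one of the two candidates, giving the unique polyline with characteristic $\chi$. Let $Q_1$ be the first vertex of $L_1$ after $B$ and $R_1$ the first vertex of $L''$ after $P_1$ (either may equal $C$). By definition, $\chi_1^{(L_1)}=1$ iff $AP_1$ crosses the surrounding segment $BQ_1$, while $\chi_1^{(L_2)}=1$ iff $AP_1$ does \emph{not} cross the bypass segment $BR_1$. If $Q_1=R_1$ the two conditions are exact negations of each other and the claim is immediate. The delicate sub-case will be $Q_1\ne R_1$: to handle it I plan to leverage the fact that $L_1$ and $L''$ share the characteristic vector $(\chi_2,\dots,\chi_n)$ and argue geometrically that $AP_1$ crosses $BQ_1$ if and only if it crosses $BR_1$. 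I expect this last step to be the main technical obstacle, possibly requiring a finer induction on the size of the "bubble" of disagreement between $L_1$ and the sequence $B\,L''$ near $B$.
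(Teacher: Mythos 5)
Your reduction to the two candidate polylines $L_1$ (avoiding $P_1$) and $L_2=BP_1L''$ is sound, and so is the observation that restriction preserves the coordinates $\chi_2,\dots,\chi_n$; by induction this does give exactly one candidate of each type for every $(\chi_2,\dots,\chi_n)$. But the entire content of the lemma is then concentrated in the step you defer: that $\chi_1^{(L_1)}\ne\chi_1^{(L_2)}$, i.e.\ that $P_1$ lies on the same side of the chord $BQ_1$ as of the chord $BR_1$. This compares the first edges of two tight polylines built over different left anchors ($B$ versus $P_1$) whose vertex sets can be entirely different (e.g.\ one can arrange $L_1=BC$ while $L''=P_1P_2C$, so $Q_1=C$ and $R_1=P_2$), and within your framework the lemma is exactly equivalent to this complementarity claim. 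The ``bubble of disagreement'' induction you gesture at is not spelled out, and I do not see that it is any easier than the original statement; as written the proposal has a genuine gap at its crux.

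The paper sidesteps this difficulty in two ways. First, since there are $2^n$ polylines and $2^n$ vectors, it suffices to prove surjectivity (every vector is attained), so one never has to compare two competing candidates for the same vector. Second, the induction splits not at the leftmost point (extremal for the angle at $A$) but at the point $P_k$ minimizing the angle $\angle ABP_k$ at the vertex $B$. For that point all of $M\setminus\{P_k\}$ lies on the far side of the line $BP_k$ from $A$, which forces every polyline avoiding $P_k$ to pass below it and every polyline having $P_k$ as a vertex to pass above it. Hence the value of $\chi_k$ alone dictates the construction --- delete $P_k$ and recurse on the same angle when $\chi_k=0$, or split the angle at $P_k$ into $BAP_k$ and $P_kAC$ and concatenate when $\chi_k=1$ --- and no first-coordinate comparison is ever needed. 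If you wish to keep the decomposition by the leftmost point, you must actually prove the complementarity claim, which I expect to be genuinely delicate; otherwise I recommend switching to the counting reduction and the extremal choice of $P_k$.
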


\begin{proof}
Observe that there are exactly $2^n$ different polylines and $2^n$ different characteristic vectors. Thus, it is sufficient to prove that each vector from $\{0,1\}^n$ is a characteristic vector for some polyline.

Using the rubber band intuition, the proof is obvious: pass the band above/below the $i$-th nail depending on the value of $\chi_i$. Then we are naturally obtaining some polyline in $M$. Let us give a formal proof.

We prove the statement by induction on $n$. For $n=0$ the statement is evident: the polyline $BC$ is the single possible polyline.

Assume that the statement holds for $m<n$, and let us prove it for $n$. Let $M$ be a set of $n$ points inside the angle $BAC$.
Take $\chi \in \{0,1\}^n$ and let us construct the corresponding polyline. Let $P_k\in M$ be the point such that the angle $ABP_k$ is the smallest among $ABP_i$, $i=1,\ldots, n$. We consider two cases depending on $\chi_k$. 

Let $\chi_k = 0$. Then the polyline should pass below $P_k$. Consider a vector $\chi'$ obtained by erasing the $k$'th coordinate of $\chi$. By induction, applied to the angle $BAC$, the set $M'= M \setminus P_k$, and the vector $\chi'$, there exists a polyline $L$ such that $\chi'$ is its characteristic vector (relative to $M'$). Moreover, each polyline for $M'$ clearly passes below $P_k$ and thus, when thought of as a polyline for $M$,  the $k$-th component of the characteristic vector of $L$ equals $0$. This completes the proof in this case. 

Let $\chi_k = 1$. The required polyline should pass above $P_k$. Let $\chi^1$, $\chi^2$ be the vectors consisting of the first $k-1$ and the last $n-k$ coordinates of $\chi$, respectively. By induction, for the angle $BAP_k$ and the set of points $M^1 = \{P_1, P_2,\ldots, P_{k-1}\}$ there exists a polyline with a characteristic vector equal to $\chi^1$. Similarly, for the angle $P_kAC$ and the set  $M^2 = \{P_{k+1}, P_{k+2},\ldots, P_{n}\}$ there exists a polyline with characteristic vector $\chi^2$. By the choice of $P_k$, the union of these polylines passes above $P_k$ and, consequently, its characteristic vector with respect to $BAC$ equals $\chi$.
\end{proof}

\begin{proof}[Proof of the conjecture]

Consider a set of points $M$. We prove the theorem by induction on the number of points of $M$. The case when the points are in convex position serves as the base of induction.

We call the points of $M$ inside  $\co(M)$ red, and the vertices of the convex hull black (here and in what follows, the colors are for convenience of notation only). Fix a black point $A$. For each red point $P$ choose a `green' point $P'$ on the ray $AP$ so that the black and the green points are in convex position. In order to prove the theorem, it is sufficient to show that the number of triangulations of a convex polygon formed by the black and green points (we call these triangulations green) is not greater than the number of partial triangulations of $M$ (we call these triangulations red).


Obviously, any red triangulation contains the edges of $\co(M)$, and similarly for the green triangulations. We are going to compare the number of red and green triangulations subclass by subclass, defined by the exact set of edges drawn from $A$.

Let $B$ and $C$ be the vertices on the convex hull of $M$ that are adjacent to $A$. Fix an arbitrary subset $S$ of green and black points. 
Consider the green triangulations in which $A$ is adjacent precisely to the elements of $S$. In those triangulations, neighboring points  of $S$ (in the left to right order according to the angle $BAC$) are obviously adjacent. These edges together form a polyline $L_1$ from $B$ to $C$ that has $S$ as the set of internal vertices. Also note that $L_1$ passes below the points from $S$ and passes above all other green or black points. To finish the green triangulation, we have to triangulate several non-overlapping polygons below $L_1$. The number of such triangulations is a product of several numbers $W_x$.

By Lemma~\ref{main-lemma}, there exists a polyline $L_2$ from $B$ to $C$ with red and black vertices inside the angle $BAC$ that has the same characteristic vector as $L_1$. (Actually, $L_1$ and $L_2$ share the same set of black vertices.) Let $T$ be the set of internal vertices of $L_2$. In the red triangulations, we draw an edge between $A$ and a red vertex if and only if it belongs to $T$ (cf. Fig.~\ref{fig}). This definition, combined with Lemma~\ref{main-lemma}, guarantees that the set of triangulations that we construct shall be distinct for distinct $S.$ Indeed, we can graphically represent it as follows: $N_{green}(A)=S\leftrightarrow L_1\leftrightarrow \chi(L_1) = \chi(L_2)\leftrightarrow L_2\leftrightarrow T=N_{red}(A)$, where $N_{color}(A)$ is the set of neighbors of $A$ within the subpolygon in the triangulation of the corresponding color, and the element on one side of any arrow  uniquely determines the element on the other side. Note that if we think of the polylines $L_1$ and $L_2$ as of rubber bands, we have that a red point and the corresponding green point are either both above their rubber bands or both below them. 

The last step of the proof is to show that there are at least as many red triangulations with $N_{red}(A)=T$ as green triangulations with $N_{green}(A)=S$. 
Let us cut $L_2$ into the maximal upwards convex (cap) sections that share endpoints.  Consider one such section $D_0,\ldots,D_m$ (see Figure~\ref{fig} for an illustration). Note that the points $D_0$ and $D_m$ are  elements of $S\cup \{B,C\}$. Indeed, if $D_0$ ($D_m$) is different from $B,C,$ then $L_2$ passes below $D_0$ by the fact that the section is a maximal cap. At the same time, $L_1$ passes below a green or a black vertex iff it is in $S$. Define a set $M'$ that consists of $D_0,D_m$ and the set of all red or black points s.t. $L_2$ passes above them (note that $L_2$ passes above any black point inside the angle $D_0AD_m$). The set $\co (M')$ is bounded by $L_2$ on one side, and so we can use any triangulation of $M'$ in the red triangulation.
Assume that there are $c$ black and $a$ green points (strictly) inside the angle $D_0AD_m$, and that $b$ green points among them belong to $S$. (Note that there are no black points that lie inside this angle and that belong to $S$ because of the convexity of $D_0,\ldots, D_m$.)   Within the angle $D_0AD_m$, the partially constructed green triangulation consists of $b+1$ non-overlapping convex polygons that we need to further triangulate. Let $k_1,\ldots,k_{b+1}$ be the sizes of these polygons. Each of the $b$ points from $S$ belongs to two of these polygons, and so  $k_1+\ldots+k_{b+1} = a+b+c+2$. Using Lemma~\ref{lemw}, the number of ways to triangulate those polygons is
\begin{equation}
\prod_{i=1}^{b+1} W_{k_i}\leq W_{k_1+\ldots+k_{b+1} - 2b} = W_{a-b+c+2}\,.
\end{equation}
On the other hand, inside the angle $D_0AD_m$ there are exactly $a$ red points, of which $b$ points correspond to the green points from $S$. They are above $L_2$ and thus do not belong to $M'$. This gives $|M'| = a-b+c+2$. Thus, by the induction statement, the number of partial red triangulations of $M'$ is at least $W_{a-b+c+2},$ which is at least the number of  green triangulations inside the angle $D_0AD_m$.

\begin{figure}
    \centering
\includegraphics[width=0.6\linewidth]{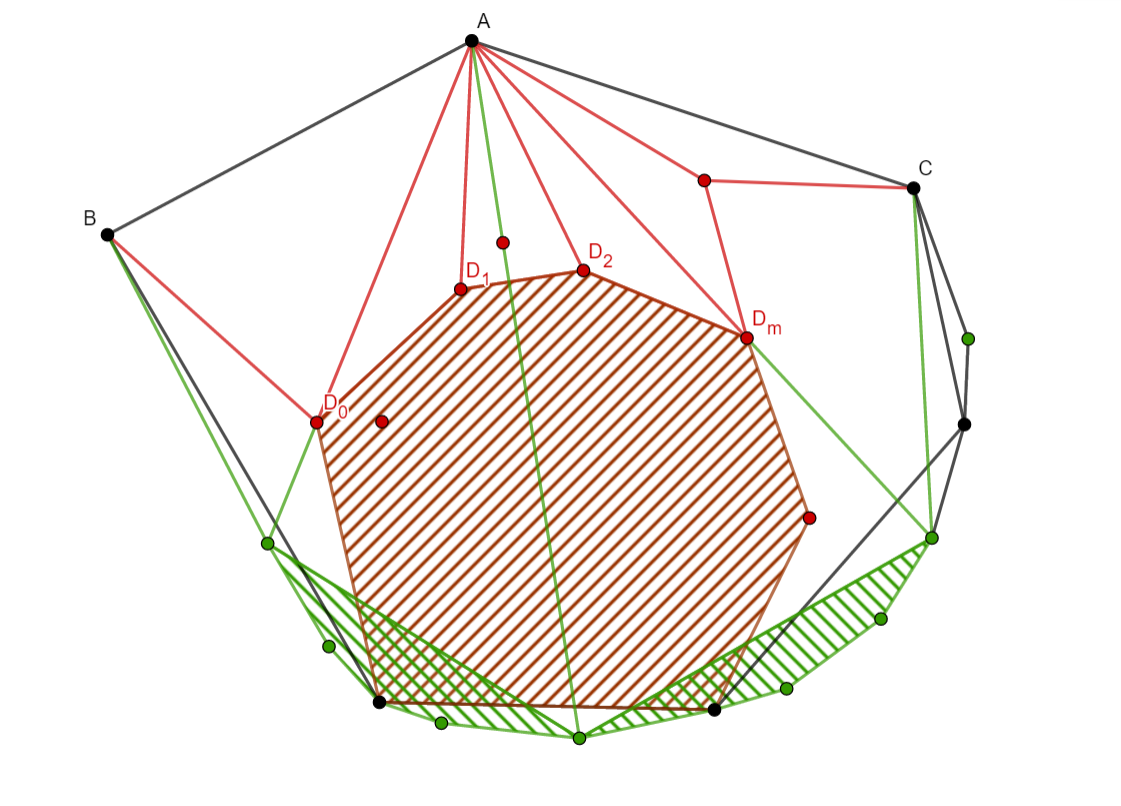}
    \caption{An example of the correspondence between green triangulations and red triangulations for one maximal cap.}
    \label{fig}
\end{figure}

Apply this argument  on each convex section of $L_2$. Once we fix a particular triangulation of each of the aforementioned green polygons, the green triangulation is complete. At the same time, even after fixing the triangulations of all $M'$, the red triangulation may be incomplete: for instance, there may be a polygon below the convex hulls of $M'$s that was not yet triangulated. But there is at least one way to finish the triangulation, and the obtained triangulations will be distinct for distinct incomplete triangulations. 

Thus, the number of green triangulations with $N_{green}(A)=S$ is at most the number of red triangulations with $N_{red}(A)=T$. Moreover, for different sets $S$, the families of corresponding triangulations, both red and green, are disjoint. Summing up the numbers of the green and the corresponding red triangulations, we obtain that the number of red triangulations is at least  the number of green triangulations, which concludes the proof.
\end{proof}

\section{Extremal point sets: proof of Theorem~\ref{thm1}}\label{sec2}
We first prove that the quasi-convex sets are indeed extremal.

\begin{remark}
Let $M$ be an quasi-convex set. Recall that each side $A_iB_i$ of the convex hull has at most $1$ point  $P_i$ that is close to it. We therefore can define a polygon $\mathcal{M}$ with the set of vertices $M$ and a natural order of the vertices: the vertices of $\co (M)$ appear in the counter-clockwise order, and the vertex $P_i$ that is close to a side $A_iB_i$ appears between the vertices $A_i$ and $B_i$.
In what follows, we call $\mathcal{M}$ a \textit{quasi-convex polygon}.
\end{remark}

\begin{lemma} \label{lemma-verify-equality-case}
Let $M$ be a set of $n$ points in a general position. If $M$ is quasi-convex then the number of partial triangulations of $M$ is equal to $W_n$.
\end{lemma}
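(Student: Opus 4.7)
\emph{Plan of proof.} I would prove Lemma~\ref{lemma-verify-equality-case} by setting up a bijection between the partial triangulations of $M$ and the triangulations of a convex $n$-gon $\mathcal{U}$, whose vertices $u_V$ (one for each vertex $V$ of $M$) appear in the same cyclic order as the vertices of the quasi-convex polygon $\mathcal{M}$. Because a convex $n$-gon has $c_{n-2}=W_n$ triangulations — the classical Catalan count, which I would derive along the way by the standard induction $c_{n-2}=\sum_{i=0}^{n-3} c_i\,c_{n-3-i}$ obtained by fixing a hull edge and summing over the third vertex of the triangle incident to it — this bijection immediately yields the lemma.

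The map $\Phi$ sends a partial triangulation $T$ of $M$ to the diagonal set of $\mathcal{U}$ consisting of: (a) for every close point $P_i$ with $P_i\notin V(T)$, the ``skip'' diagonal $u_{A_i}u_{B_i}$; and (b) for every internal edge $XY$ of $T$ that is neither a hull edge of $\co(M)$ nor a close edge of the form $A_iP_i$ or $P_iB_i$, the diagonal $u_Xu_Y$. A direct Euler-formula count gives $|\Phi(T)|=n-3$, matching the number of diagonals in a triangulation of a convex $n$-gon. The inverse $\Phi^{-1}$ reads each skip diagonal of a triangulation $T'\subset\mathcal{U}$ as ``exclude the corresponding close point''; the other close points are included in $V(T)$ (each forcing its close triangle $A_iP_iB_i$ in $T$), and every non-skip diagonal of $T'$ is converted back into the corresponding diagonal of the partial triangulation.

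The main obstacle is to verify that $\Phi(T)$ is a triangulation of $\mathcal{U}$, i.e., that its diagonals are pairwise non-crossing, and symmetrically for $\Phi^{-1}$. After handling the easy sub-cases involving skip diagonals (which separate only $u_{P_i}$ from the rest, and so cannot cross any diagonal of $\Phi(T)$ because $P_i\notin V(T)$ rules out any diagonal of $T$ touching $P_i$), the crucial step is the following geometric claim: for a quasi-convex set $M$, two segments between vertices of $M$ that are neither hull edges nor close edges cross in $\co(M)$ if and only if their endpoint pairs interleave in the cyclic order of $\mathcal{M}$. The closeness hypothesis is essential: because each close point $P_i$ lies strictly inside every triangle $\triangle QA_iB_i$ for $Q\in M$, the segment from $P_i$ to any other vertex $Z$ is contained in $\triangle A_iB_iZ$, and I plan to verify that, with respect to crossings with any other vertex-to-vertex segment, it behaves combinatorially exactly like a segment from a point on the edge $A_iB_i$ to $Z$. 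Iterating this ``replace $P_i$ by a point on its edge'' observation reduces the situation to the convex case, where chords cross if and only if their endpoints interleave in the cyclic order — and by construction that ordering is precisely the one on $\mathcal{M}$.
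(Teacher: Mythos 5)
Your approach is sound but genuinely different from the paper's. The paper argues by induction on $n$: it first shows that every diagonal of the quasi-convex polygon $\mathcal{M}$ lies inside $\mathcal{M}$, then fixes one close point $P$ with neighbours $A,B$ in $\mathcal{M}$ and splits the partial triangulations according to whether $P$ is unused (contributing $W_{n-1}$) or used, in which case the forced edge $AP$ and the triangle $APC$ on its other side cut $\mathcal{M}$ into two smaller quasi-convex polygons of sizes $k$ and $n-k+1$; this reproduces the recurrence $W_n=\sum_k W_kW_{n-k+1}$ term by term. You instead build an explicit bijection with the triangulations of a convex $n$-gon ordered as $\mathcal{M}$, which is arguably more illuminating (it explains directly why the count coincides with the convex case) but concentrates all the difficulty into one geometric claim: crossings of non-hull, non-close edges are governed exactly by interleaving in the cyclic order of $\mathcal{M}$. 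Your skip-diagonal bookkeeping, the observation that an included close point $P_i$ forces the triangle $A_iP_iB_i$, and the count $|\Phi(T)|=n-3$ are all correct; moreover the edge count $2h+3j-3$ guarantees that the preimage of a triangulation of $\mathcal{U}$ is automatically a genuine partial triangulation once pairwise non-crossing is established.

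The crossing-iff-interleaving claim is true, but the ``replace $P_i$ by a point $p$ on $A_iB_i$'' reduction needs more care than your sketch suggests, because $P_iZ$ is a proper subsegment of the chord $pZ$ and the discarded piece $pP_i$ depends on $Z$, so this is not a clean transformation of the point set that can simply be iterated. Two facts must be verified. First, no segment $XY$ with $X,Y\in M\setminus\{P_i\}$ meets $pP_i$: this holds because $\triangle XA_iB_i$ lies in the closed half-plane of the line $XY$ that contains $A_iB_i$, so that line never strictly separates $P_i$ from the relative interior of $A_iB_i$. Second --- and this is the step your iteration glosses over --- when both segments end at close points, the two discarded pieces $pP_i$ and $qP_j$ must not cross each other. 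For non-adjacent sides this follows since $pP_i\subset\triangle A_jA_iB_i\cap\triangle B_jA_iB_i$ while $qP_j$ lies in the analogous region attached to $A_jB_j$, and these regions meet in at most one extreme point; for adjacent sides sharing the vertex $B_i=A_j$ one compares angles at $B_i$, using $\angle P_iB_iA_i\le\angle P_jB_iA_i$ (which follows from $P_i\in\triangle P_jA_iB_i$) to place the two pieces in disjoint angular sectors. With these verifications supplied, your bijection goes through.
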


\begin{proof}
We prove the statement by induction on $n$. The base case is $n=3$. Recall that by convention $W_2 =1$ as well. First note that the following formula is a restatement of the formula for the Catalan numbers in terms of $W$'s:
$$W_n = \sum_{k=2}^{n-1} W_kW_{n-k+1}.$$
Next, let us show that the number of triangulations of a set $M$ of $n$ points in convex position is equal to $W_n$ for all $n\ge 4.$ Indeed, pick any side $AB$ of the convex hull. In any triangulation, there should be a unique vertex $P$ that forms a triangle with $AB.$ Triangle $PAB$ splits $M$ into two convex sets $M_1, M_2$, each of size at least $2$. If $|M_1| = k$ then $|M_2|=n-k+1.$ By induction, the number of triangulations of $M_1$ is $W_k$, and that of $M_2$ is $W_{n-k+1}$. Of course, any triangulation of $M_1$ can be composed with any triangulation of $M_2$ in order to get a triangulation of $M$. Varying the choice of $P,$ we get that the number of triangulations of $M$ is $\sum_{k=2}^{n-1} W_kW_{n-k+1}$, which is equal to $W_n$.

In what follows, we assume that $M$ has at least $1$ interior point.

Let $A$, $B$ be some non-neighbor vertices of $\mathcal{M}$. Let us prove that the segment $AB$ is fully contained inside the polygon $\mathcal{M}$. It is sufficient to show that $AB$ does not intersect any side of $\mathcal M$. This is obvious in the case when both vertices of the side belong to the convex hull of $M$. Now assume that $CD$ is a side of $\mathcal M$ such that $C$ is an interior point of $M$ and $D$ is a vertex of its convex hull. 
Denote $E$ the other vertex of the side of $\co (M)$ to which $C$ is close. 
Then, by the definition of a close point, $C \in \co(\{B,D,E\})\cap \co(\{A,D,E\})$. But $AB$ cannot be contained in both triangles. Indeed, w.l.o.g. assume that the distance from $A$ to the line $DE$ is at most the distance from $B$ to the line $DE.$ Then the distance from any point of $AB$ to $DE$ is at least as big as the distance from $A$ to the line $DE$, and thus $AB$ is not contained in $ADE.$ At the same time, the segment $CD$ lies inside $ADE,$ which means that the triangle $ADE$ ``separates'' $CD$ and $AB$. Therefore, $CD$ does not intersect with $AB$. Since this is valid for any choice of $CD,$ we conclude that $AB$ lies fully inside  $\mathcal{M}$.

Fix an interior point $P$ of $M$. Let $A$, $B$ be the two neighbor points of $P$ in $\mathcal{M}$. Let us consider a partial triangulation of $\mathcal{M}$. If $P$ is an isolated vertex of the triangulation then this triangulation is also a partial triangulation of $M \setminus \{P\}$, and by induction there are $M_{n-1}$ of those. Note that $W_{n-1} = W_{n-1}W_2$.

If $P$ is not an isolated vertex then, by the closeness of $P$, $PA$ and $PB$ must be among the edges of the triangulation. Let $C$ be a vertex of a triangle in the triangulation that contains $AP$. The triangle $ACP$ divides $\mathcal{M}$ into two polygons $\mathcal{M}_1$ and $\mathcal{M}_2$. Each of these polygons are not self-intersecting by the conclusion derived two paragraphs above. Denote by $M_1$, $M_2$ the corresponding point sets. Let us prove that $M_1$ and $M_2$ are quasi-convex. First we note that interior points different from $P,C$ remain close to the corresponding sides. The fact that $P$ is contained in the triangle $ABC$ implies that $P$ is a vertex of the convex hull of $M_2.$ Thus, the only potential problematic vertex is $C$ itself. If $C$ is a vertex of the convex hull of $M$ then both  $ M_1$ and $ M_2$ are quasi-convex. If $C$ is a close point to some side $DE$ then $C$ is inside the triangle $PDE$. Therefore, $C$ is a vertex of the convex hull of $M_1$ and $M_2$, which means that $M_1$ and $M_2$ are quasi-convex again.

If $M_1$ consists of $k$ points then $M_2$ consists of $n-k+1$ points. Moreover, note that, by varying the choice of $C$, $k$ can take each value from $2$ to $n-2$ ($M_2$ has at least $3$ vertices: $B,C,P$).

By the inductive assumption, the number of partial triangulation of $M$, where $P$ is not an isolated vertex and $\mathcal{M}_1$ consists of $k$ vertices, is $W_{k} \cdot W_{n-k+1}$. Adding the number of partial triangulations with $P$ isolated, we get that the total  number of partial triangulations of $M$ is equal to
$$W_{n-1}+\sum_{k=2}^{n-2}W_kW_{n-k+1} = \sum_{k=2}^{n-1} W_kW_{n-k+1}  = W_n,$$ which concludes the proof.
\end{proof}

In what follows, we show that if $M$ contains interior points that are not close to any of the sides of the convex hull, then the number of partial triangulations of $M$ is strictly bigger than $W_n.$ We start with a set of definitions and a key lemma that are analogous to that in Section~\ref{sec1}.

Consider some points $A,X,Y$ in general position on the plane, so that neither $AX$ nor $AY$ is vertical.

We say that a point $X\in M$ is \emph{to the left} of a point $Y\in M$ if, when rotating a vertical ray emanating from $A$ in the counter-clockwise direction,  we first hit $X$ and then $Y$ (cf. Figure~\ref{fig3}).

\begin{figure}
    \centering
\includegraphics[width=0.6\linewidth]{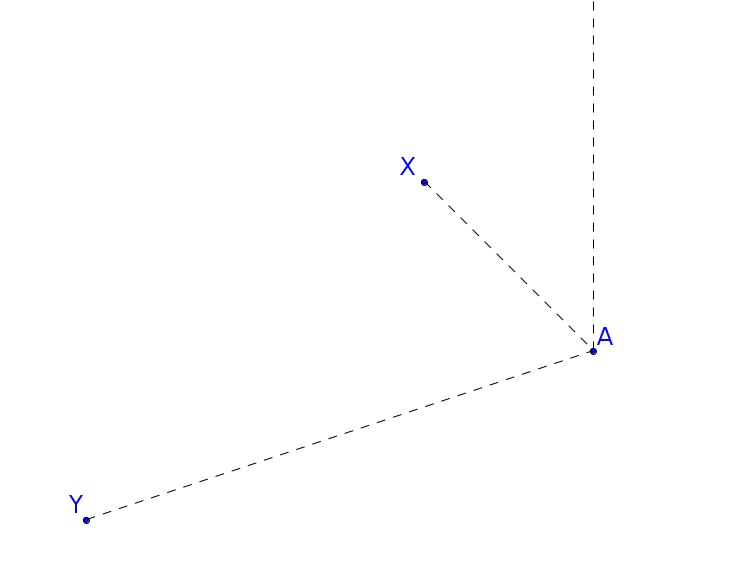}
    \caption{Point $X$ is to the left of the point $Y$.}
    \label{fig3}
\end{figure}

In what follows, we use the term {\it polygon} for a closed polygonal chain. It will play the same role as polylines in the first proof.  
We only consider polygons with vertices $P_1,\ldots, P_n$ such that for any two consecutive vertices $P_i,P_{i+1}$ (indices taken modulo $n$) we need to rotate the ray $AP_i$ by less than $\pi$ counter-clockwise in order to hit $P_{i+1}$. We call such polygons {\it good}, but often omit it, writing simply a ``polygon'').

Fix a point  $A$ and a set of points $M$, all points being in general position. 
The set $M = \{P_1, P_2, \ldots, P_n\}$ is enumerated left to right. We also assume that no ray $AP_i$ is vertical. 
Consider an arbitrary polygon $L=P_{a_1}\ldots P_{a_{m}}$, enumerated from left to right.

The next definition is  describing the position of points in $M$ with respect to $L$. Let us first give it informally. Imagine that there are nails in each of the points $M$ and there is a closed rubber band stretched around $A$ that breaks exactly in the points of $L$. We are going to define a characteristic vector of $L,$ which for each point of $P$ encodes whether  the rubber band passes closer to $A$ than the nail or farther from $A$ than the nail.

Let us give a formal definition. Fix a point $P_i \in M$. Let $P_{l(i)}$ and $P_{r(i)}$ be the two points of the polygon $L$ (i.e., $l(i),r(i)\in \{a_1,\ldots, a_{m}\}$) such that $l(i)<i<r(i)$ and $r(i)-l(i)$ is minimal possible, all indices taken modulo $n$. 

Let $P_i$ be a vertex from $M\setminus L$. We say that the polyline  \emph{passes above} $P_i$ if  $\angle P_{l(i)}P_iP_{r(i)}<\pi$, and \emph{passes below} $P_i$ if 
$\angle P_{l(i)}P_iP_{r(i)}>\pi$, where here and below we measure the angles that contain $A$.
Let $P_i$ be a vertex from $ L$. Then the definition is opposite (and it is useful to keep in mind the rubber band analogy here). We say that the polygon  \emph{passes above} $P_i$ if $\angle P_{l(i)}P_iP_{r(i)}>\pi$ 
and \emph{passes below} $P_i$ if $\angle P_{l(i)}P_iP_{r(i)}<\pi$.
Finally, we define the  \textit{characteristic vector} $\chi_L \in \{0,1\}^n$ of $L$, where $\chi_i = 1$ iff $L$ passes above $P_i$.


Consider the map $\Psi: \{\text{polygons on }M \}\to \{0,1\}^n$ that maps a polygon to its characteristic vector.
\begin{lemma}\label{main-lemma2}
Fix a point $A$ and a set of points $M=\{P_1, P_2, \ldots, P_n\}$ inside the angle with properties as above. Then $\Psi$ is an injection. Moreover, the image of $\Psi$ stays the same if we replace the set $M$ by another set of points $M' = \{P_1',\ldots, P_n'\}$, where $P_i'$ lies on the ray $AP_i$ for each $i =1,\ldots, n.$
\end{lemma}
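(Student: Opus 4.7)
The plan is to establish Lemma~\ref{main-lemma2} in two stages — injectivity of $\Psi$, then invariance of the image under moving points along rays $AP_i$ — using techniques analogous to those of Lemma~\ref{main-lemma} together with a continuous deformation argument.

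For injectivity, I would induct on $n$, paralleling the proof of Lemma~\ref{main-lemma}. Given two good polygons $L, L'$ on $M$ with $\chi_L = \chi_{L'}$, pick the leftmost point $P_1$ and split into cases based on whether $P_1$ is a vertex of $L$ and/or $L'$. If $P_1$ is a vertex of neither, both polygons live entirely on $M \setminus \{P_1\}$; the characteristic vectors restricted to $M \setminus \{P_1\}$ agree, and induction gives $L = L'$. If $P_1$ is a vertex of both, cutting each polygon at $P_1$ and at the next common vertex CCW converts the problem into a polyline comparison to which Lemma~\ref{main-lemma} can be adapted. The asymmetric case — $P_1$ is a vertex of exactly one polygon — must be ruled out through a careful analysis exploiting the extremality of $P_1$ and the reversal of the vertex vs.\ non-vertex convention in the definition of $\chi$.

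For the \emph{moreover} claim, the key observation is that a subset $S \subseteq M$ yields a good polygon if and only if the rays $AP_i$ for $P_i \in S$ cover all angular directions around $A$ — equivalently, $A \in \inter(\co(S))$ — and this condition is invariant under moving each $P_i$ along its ray. Hence the combinatorial set of good polygons does not change under interpolation from $M$ to $M'$ along the rays. During such a linear interpolation, the $\chi$-vector of a fixed polygon is locally constant, changing only at isolated critical moments when some $P_i(t)$ lies on an edge $P_{l(i)}P_{r(i)}$ of some polygon $L$. At such a moment exactly two polygons are affected: $L$ itself (where $P_i$ is a non-vertex whose $\chi_i$ flips as it crosses the edge) and $L^+ = L \cup \{P_i\}$ (where $P_i$ is inserted as a vertex into that same edge, flipping $\chi_i$ in the opposite sense due to the reversed convention). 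The two $\chi_i$ values thus swap, so the image of $\Psi$ is unchanged. Iterating over all critical moments of the interpolation yields the invariance.

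The most delicate step is the exclusion of the asymmetric case in the injectivity argument, which requires detailed angle-chasing at $P_1$, using that both polygons' neighbors of $P_1$ lie at CCW angles strictly greater than that of $AP_1$. A secondary subtlety is verifying, in the deformation argument, that a generic interpolation path from $M$ to $M'$ encounters critical events one at a time, with each event corresponding to the described pair $(L, L^+)$, so that the $\chi$-swap mechanism applies unambiguously.
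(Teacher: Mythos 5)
Your argument for the ``moreover'' part is essentially the paper's: a deformation along the rays $AP_i$, with the characteristic vector locally constant and changing only at collinearity events $P_j\in P_{l(j)}P_{r(j)}$, where the pair $L$, $L\cup\{P_j\}$ exchanges characteristic vectors because of the reversed vertex/non-vertex convention. (One small imprecision: a single collinearity event affects not just two polygons but every polygon $L$ for which $P_{l(j)},P_{r(j)}$ are the angularly nearest vertices around $P_j$; the affected polygons still pair up as $(L, L\cup\{P_j\})$, so the conclusion stands.) That half of the proposal I consider correct.

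The injectivity half has a genuine gap. You reduce to the ``asymmetric case'' --- the leftmost point $P_1$ is a vertex of exactly one of the two polygons --- and then assert it ``must be ruled out through a careful analysis exploiting the extremality of $P_1$,'' without supplying that analysis. For a general (non-convex) configuration this is exactly the hard point: with $P_1\in L_1\setminus L_2$, one compares $\angle P_{l}P_1P_{r}$ for the two polygon-neighbours of $P_1$ in $L_1$ against the analogous angle for the nearest $L_2$-vertices flanking $P_1$, and nothing forces these two angles to sit on opposite sides of $\pi$; leftmost-ness of $P_1$ gives you no control over whether $P_1$ is a convex or reflex vertex of $L_1$. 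The ``both vertices'' case is also shakier than you suggest: the two polygons need not share a second vertex, and Lemma~\ref{main-lemma} lives in a different setting (polylines between two fixed endpoints of an angle, with a segment-intersection definition of the characteristic vector), so the ``adaptation'' is not free. The missing idea is to prove the ``moreover'' part \emph{first} and then use it: moving the points along the rays changes neither the set of good polygons (goodness depends only on the angles at $A$) nor the image of $\Psi$, so one may slide all points into convex position; there, if $P_{a_i}$ is the first point lying in exactly one polygon, convexity immediately forces the two characteristic vectors to differ in coordinate $a_i$, and injectivity for the original set follows by comparing cardinalities. I would restructure your proof in that order rather than trying to close the asymmetric case directly.
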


\begin{proof}
Using the rubber band intuition, the proof is simple. For the first part, if we pass  the band above/below the $i$-th nail depending on the value of $\chi_i$ and then tighten it, then we clearly get a unique polygon (it may not necessarily be good though). Once we got a good polygon, we can move the nails along the rays emanating from $A$. It may change the vertices of the polygon, but it will not change the characteristic vector of the polygon, and the polygon will stay good. Let us give a formal proof.

We first prove the second part of the statement. In order to do so, it is sufficient to show the following: whenever $\Psi(M)$ contains a certain characteristic vector, this vector is also contained in $\Psi(M'),$ where $M' = \{P_1,\ldots, P_{i-1},P_i',P_{i+1},\ldots, P_n\}$ and $P'_i$ lies on the ray $AP_i$.

We start with the set $M$ and fix a polygon $L$ that corresponds to some characteristic vector $\chi$. We move the point $P_i$ towards $P_i'$. Note that the characteristic vector of the polygon changes only if for some $P_j$ the points $P_j$, $P_{l(j)}$ and $P_{r(j)}$ become collinear. 
(This is easy to see because the definition of above/below the polygon for a given point $P_j$ depends on the positioning of $P_{\ell(j)}$ and $P_{r(j)}$ only, and thus, in order to see the change in the characteristic vector of $L$, we need the collinearity to happen for such three points for some $j$.) Moreover, one of these three points must be $P_i.$
Such collinearities will occur only finitely many times while moving $P_i$ to $P_i'$, and so it is enough to check that the vector $\chi$ is a characteristic vector of some polyline for the new point set, obtained after $P_i$ passing through this collinearity. Thus, abusing notation, let us assume that $P_i$ is coincides with the position of the point just before the collinearity and $P_i'$ coincides with the position of the point just after the collinearity.

We also denote $P_k':= P_k$ if $k\ne i$ for convenience. We consider some cases depending on the position of the points. Let $\chi_j$ be the $j'$th coordinate of $\chi.$  If $\chi_j=1$ and $P_j$ is a vertex of $L$ then  $\angle P_{l(j)}P_j P_{r(j)}<\pi$ and  $\angle P_{l(j)}'P_j' P_{r(j)}'>\pi$. In this case, we remove the vertex $P_j$ from the vertices of $L$, obtaining a new polygon $L'$. It is easy to see that the characteristic vector of $L'$ (as a polygon on $M'$) coincides with $\chi$. Indeed, the $j$-th coordinate coincides because of the modification we made, and the other coordinates stay the same because we chose $P_i, P_i'$ sufficiently close to the collinearity position (and thus no other collinearity happened while moving from $P_i$ to $P_i'$).
If $\chi_j = 1$ and $P_j$ is not a vertex of $L$ then $\angle P_{l(j)}P_j P_{r(j)}>\pi$ and  $\angle P_{l(j)}'P_j' P_{r(j)}'<\pi$. In this case, we add the vertex $P_j$ to the vertices of $L$, obtaining a new polygon $L'$. The analysis is analogous. The cases when $\chi_j=0$ are treated analogously. This completes the proof of the second part.

We move on to the proof of the first part of the lemma. We first note that, when replacing $M$ by $M'$ as in the second part of the lemma, any ``good'' polygon stays good (this only depends on the angles $P_{a_i}AP_{a_{i+1}}$, but these angles stay the same when moving the points along the rays emanating from $A$). Thus, when we move points along the rays, neither the set of good polygons nor the image of $\Psi$ change. By moving the points along the rays, we can achieve that $M$ is in convex position, and thus it is sufficient to prove the first part of the lemma for such sets $M$.

Assume that $M$ is in convex position and take two different polygons $L_1,L_2$ on the vertices of $M$. Going left to right, let $P_{a_i}$ be the first point that is the vertex of exactly one polygon of those two, say, of $L_1$. Using convexity of $M$, the point $P_{a_i}$ lies above the polygon $L_1$ and below the polygon $L_2$, and thus these two polygons have different characteristic vectors. This proves the injectivity of $\Psi.$
\end{proof}

The first and key step in the proof of Theorem~\ref{thm1} is to reduce the situation to the case when we have one internal point. Assume that $A\in M$ is an internal point that is not close to any side of $\co (M)$. Recall that we call the points of the convex hull of $M$ {\it black}. We are going to apply the same argument to $A$ as it was done in the proof of the conjecture. More precisely, we replace each interior (red) point $P\in  M\setminus \{A\}$ by another (green) point $P'$, so that all green and black points are in convex position. Denote by $M'$ the set of all green and black points together with $A$. The only difference between this and the previous argument is that we apply Lemma~\ref{main-lemma2} instead of Lemma~\ref{main-lemma}. Lemma~\ref{main-lemma2} guarantees the bijection between good green polygons and good red polygons, and this is the only property that we actually use. The rest of the proof stays precisely the same, and we conclude that the number of partial triangulations of $M$ is at least the number of partial triangulations of $M'$.

The next step is the following lemma.
\begin{lemma}
$A$ is not close to any side of the convex hull of $M'.$
\end{lemma}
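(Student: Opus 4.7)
The plan is to argue by contradiction: assume $A$ is close to some side $QR$ of $\co(M')$ and derive a contradiction with the hypothesis that $A$ is not close to any side of $\co(M)$. For each $X\in M'\setminus\{A\}$ let $X^*$ denote its counterpart in $M$, so that $X^*=X$ when $X$ is black and $X^*=P$ when $X=P'$ is the green replacement of a red point $P$. Because a green $P'$ must sit outside $\co(M)$ in order to be a vertex in convex position with the blacks, while $P$ is interior to $\co(M)$, the red point $P$ always lies on the segment $AP'$. The key technical tool is a scaling lemma: if $A\in\triangle XYZ$ and $X^*$ lies on the segment $AX$, then $A\in\triangle X^*YZ$. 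This is an immediate convex-combination check (write $A=\alpha X+\beta Y+\gamma Z$, $X^*=tA+(1-t)X$ with $t\in(0,1)$, solve for $X$ and substitute to express $A$ as a normalized non-negative combination of $X^*,Y,Z$). Applying the lemma successively to each of the three vertices $X,Q,R$ transforms the closeness condition ``$A\in\triangle XQR$ for every $X\in M'\setminus\{Q,R\}$'' into
\[
A\in\triangle Y\,Q^*R^*\quad\text{for every }Y\in M\setminus\{Q^*,R^*\}.
\]

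Next I split into two cases. Suppose first that both $Q^*$ and $R^*$ are black. Since every green shares its direction from $A$ with the corresponding red, the angular order of $M'\setminus\{A\}$ around $A$ coincides with that of $M\setminus\{A\}$ under the identification $P'\leftrightarrow P$. Adjacency of $Q,R$ on $\co(M')$ means no other point of $M'\setminus\{A\}$ lies between them angularly around $A$; the same therefore holds for $M\setminus\{A\}$, and since the cyclic order of hull vertices around an interior point equals their cyclic order on the hull, $Q^*R^*=QR$ must be an edge of $\co(M)$. The displayed condition then says exactly that $A$ is close to this edge of $\co(M)$, contradicting the hypothesis.

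Suppose instead that at least one of $Q^*,R^*$, say $Q^*$, is an interior point of $\co(M)$. Then the line $\ell$ through $Q^*$ and $R^*$ crosses the interior of $\co(M)$, and by general position its intersections with $\partial\co(M)$ other than $R^*$ itself lie on the interiors of edges. Consequently there exist hull vertices $B_1,B_2$ of $M$, both different from $Q^*$ and $R^*$, strictly on opposite sides of $\ell$. The triangles $\triangle B_1Q^*R^*$ and $\triangle B_2Q^*R^*$ lie in opposite closed half-planes bounded by $\ell$, so their intersection is the segment $\overline{Q^*R^*}$. The displayed condition puts $A$ in both triangles, so $A$ lies on $\overline{Q^*R^*}$, making $A,Q^*,R^*$ collinear --- impossible by general position.

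The main obstacle is the first case, where one has to verify carefully that adjacency of $Q,R$ on $\co(M')$ transfers via the angular-order argument to adjacency on $\co(M)$, ensuring that the transported condition is really a closeness condition in $M$. Once this is set up, the rest is just the elementary scaling lemma combined with the half-plane intersection trick in the second case.
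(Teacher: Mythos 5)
Your proof is correct, and it is organized differently from the paper's. The paper argues forward inside $M'$: when at least one endpoint of the side is green it locates the nearest black hull vertices $P',Q'$ on either side and exhibits one of the triangles $PP'Q'$, $P'Q'Q$ as a witness not containing $A$; when both endpoints are black it takes a red witness $U$ (which exists because $A$ is not close to that side of $\co(M)$) and checks that the separating line survives when $U$ is pushed out to its green counterpart $U'$. You instead prove a single pull-back lemma --- containment of $A$ in a triangle is preserved when a vertex is slid along a ray toward $A$, which is the same barycentric fact the paper uses implicitly for $U\mapsto U'$, applied in the opposite direction --- and use it to transport the entire closeness condition from $M'$ back to $M$ in one stroke. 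Your case split is then on whether $Q^*,R^*$ are both hull vertices of $M$ (where you correctly supply the angular-order argument showing $QR$ is in fact an edge of $\co(M)$, a point the paper glosses over) or one of them is interior (where the two-half-plane trick forces $A$ onto the line $Q^*R^*$, contradicting general position). Both routes are valid; yours is more unified and makes the underlying scaling fact explicit, while the paper's is shorter in the green case because it never leaves $M'$ and needs no collinearity argument. I checked the details: $P$ does lie on $[A,P']$ since $P'$ must leave $\co(M)$ to be in convex position with the black points, the barycentric computation in the scaling lemma is sound, and the existence of vertices of $\co(M)$ strictly on both sides of $\ell$ follows from $Q^*$ being interior. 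No gaps.
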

\begin{proof}
Assume that $A$ is close to a side $PQ$ of $\co (M')$, and $P$ precedes $Q$ in the counter-clockwise order. We consider two cases depending on the color of $P,Q.$ If one of these points is green, then consider the black points $P',Q'$ on the convex hull such that $P'$ is the closest black point that precedes $P$ in the counter-clockwise order (which may coincide with $P$) and, similarly, $Q'$ is the closest black point that succeeds $Q$ in the counter-clockwise order (which, again, may coincide with $Q$). Note that $PQ$ separates $A$ from $P'Q'$ because $A$ was contained in the convex hull of the black points, and thus either $PP'Q'$ or $P'Q'Q$ is a triangle that does not contain $A$.

Next, assume that both $P$ and $Q$ are black. Since $A$ was not close to $PQ$ in $M$, there was a red point $U\in M$ such that $A$ was not contained in the triangle $UPQ$. This implies that one of the lines $UQ$, $UP$ separated $A$ from $UPQ$. W.l.o.g., assume that this was $UQ$. Denote by $U'$ the green point that corresponds to $U$. But then the line $U'Q$ separates $A$ from the triangle $U'PQ,$ and thus $A$ is is not close to $PQ.$
\end{proof}

The last part of the proof is to show that if $M'$ is a set of $n$ points with exactly $1$ interior point $A$ that is not close to any side of the convex hull, then the number of partial triangulations of $M'$ is strictly bigger than $W_n$.

Assume that the vertices of $M'$ are $P_1,\ldots, P_{n-1}$. First, note that if a point $X$ is a close to some side $P_iP_{i+1}$ if and only if it is contained in the triangles $P_iP_{i+1}P_{i+2}$ and $P_{i-1}P_iP_{i+1}$ (with indices modulo $n-1$). Thus, for any $4$ consecutive vertices $P_{i-1},P_i,P_{i+1},P_{i+2}$ $A$ is either not contained in $P_{i-1}P_iP_{i+1}$ or in $P_iP_{i+1}P_{i+2}$. Fix any vertex $P_i$ of the convex hull of $M'$ such that $A$ does not lie in $P_{i-1}P_iP_{i+1},$ and assume that the ray $P_iA$ intersects the side $P_jP_{j+1}$ of the convex hull of $M'$. By symmetry, we may assume that $A$ is not contained in $P_{j-1}P_jP_{j+1}.$ Let $A'$ be a point on the ray $P_iA$ just past the segment $P_{j}P_{j+1}$. In particular, $P_1,\ldots, P_{n-1},A'$ are in convex position.

Recall that in the proof of the conjecture, we constructed an injection from the set of green triangulations to the set of red triangulations. Therefore, in order to show that $M'$ has strictly more than $W_n$ partial triangulations, it is sufficient to exhibit a partial red triangulation that is not an image of any green triangulation.

Consider the red triangulation, in which $P_i$ is only connected to $P_{i-1},P_{i+1}$ and $A$, and in which $A$ is connected to all $P_s$, $s\in [n-1]\setminus \{j\}$. We additionally draw an edge $P_{j-1}P_{j+1}.$ We claim that this red triangulation is not an image of any green triangulation.

Indeed, in the proof of the conjecture we split the neighbors of $P_i$ into convex parts, which in our case would be $P_{i-1}A$ and $AP_{i+1}.$ In the green triangulation, we will then triangulate the polygons $P_{i-1}P_{i-2}\ldots P_jA'$ and $P_{i+1}P_{i+2}\ldots P_{j+1}A$ independently, which corresponds to red triangulations that each contain the edges $P_jA$ and $P_{j+1}A$ (and a triangle $P_jP_{j+1}A$). However, there is no edge $P_jA$ in our triangulation, which proves the claim and concludes the proof of Theorem~\ref{thm1}.\\

{\sc Acknowledgements: } This research was started during the 2nd Adygea Problem Solving Workshop in October 2020. The authors thank Emo Welzl for sharing this problem. The authors acknowledge the financial support from the Russian Government in the framework of MegaGrant no 075-15-2019-1926.

\end{document}